\title{Theoretical Foundations for the Dynamic Mode Decomposition of High Order Dynamical Systems}
\author{%
  Joel A. Rosenfeld\thanks{The following YouTube playlist discusses the content of this manuscript: \url{https://youtube.com/playlist?list=PLldiDnQu2phtB0fZbr-wOgisJS_684bNP}}\\
  Department of Mathematics and Statistics\\
  University of South Florida\\
  Tampa, Fl 33620 \\
  \texttt{rosenfeldj@usf.edu} \\
   \And
   Benjamin P. Russo \\
   Department of Mathematics\\
   Farmingdale State College\\
   Farmingdale, NY 11735 USA \\
   \texttt{russobp@farmingdale.edu} \\
   \And
   Rushikesh Kamalapurkar \\
   School of Mechanical and Aerospace Engineering \\
   Oklahoma State University\\
   Stillwater, OK 74078 \\
   \texttt{rushikesh.kamalapurkar@okstate.edu} \\
}
\DeclareMathOperator{\vspan}{span}
\begin{document}

\maketitle

\begin{abstract}
  Conventionally, data driven identification and control problems for higher order dynamical systems are solved by augmenting the system state by the derivatives of the output to formulate first order dynamical systems in higher dimensions. However, solution of the augmented problem typically requires knowledge of the full augmented state, which requires numerical differentiation of the original output, frequently resulting in noisy signals. This manuscript develops the theory necessary for a direct analysis of higher order dynamical systems using higher order Liouville operators. Fundamental to this theoretical development is the introduction of signal valued RKHSs and new operators posed over these spaces. Ultimately, it is observed that despite the added abstractions, the necessary computations are remarkably similar to that of first order DMD methods using occupation kernels.
\end{abstract}

\section{Introduction}

Data driven methods for dynamical systems have developed significantly over the past 20 years (cf. \cite{budivsic2012applied,kutz2016dynamic,proctor2016dynamic,mauroy2020linear,mauroy2016global,brunton2016discovering}). Principle among them are those that leverage Koopman operators (also known as composition operators) over Hilbert function spaces to give a representation of finite dimensional discrete time dynamics as an operator over an infinite dimensional Hilbert space \cite{budivsic2012applied,williams2015kernel}. When a continuous time dynamical system is forward complete, it may be discretized by fixing a time-step, $\Delta t > 0$, to yield a discrete time system. In this setting, the Koopman operator has been demonstrated as an effective tool for extracting the underlying governing principles of a dynamical system, and for providing a model for the state which performs well over short time horizons via Dynamic Mode Decompositions (DMD). The collection of Koopman operators, indexed by $\Delta t$, is called the Koopman semigroup, and as $\Delta t \to 0$, the Koopman operators converge strongly to the \emph{Koopman generator}, given as $A_f g = \nabla g(\cdot) f(\cdot)$ \cite{budivsic2012applied}. However, there are many dynamical systems of interest that are not forward complete (e.g., models of the form $\dot x = 1 + x^2$, often encountered in mass-action kinetics in thermodynamics \cite[Section 6.3]{haddad2019dynamical}, chemical reactions \cite[Section 8.4]{toth2018reaction}, and species populations \cite[Section 4.2]{hallam2012mathematical}), and consequently, the breadth of applicability of Koopman based methods is limited. Koopman generators are a proper subset of Liouville operators, which are formally the same operators, but where the dynamics are not shackled by the requirement of forward completeness.

In \cite{rosenfeld2019occupation}, the concept of occupation kernels was introduced as functions inside of a RKHS that, given a signal $\theta:[0,T]\to \mathbb{R}^n$, represent the functional $g \mapsto \int_0^T g(\theta(t))dt.$ Occupation kernels generalize the concept of an occupation measure (cf. \cite{lasserre2008nonlinear}) by changing the setting from a collection of measures to that of a Hilbert space. Thus, occupation kernels can be leveraged as a basis in a Hilbert space for function approximation and projections (cf. \cite{rosenfeld2019occupation,rosenfeldCDC2019,rosenfeldDMD,rosenfeld2021dynamic,li2021fractional,russo2021motion}). Significant to the problem at hand, if $A_f$ is densely defined over a RKHS over $\mathbb{R}^n$ of continuously differentiable functions, and $\gamma:[0,T] \to \mathbb{R}^n$ is a trajectory satisfying $\dot \gamma = f(\gamma)$, then $\langle A_f g, \Gamma_{\gamma} \rangle_H = g(\gamma(T)) - g(\gamma(0)) = \langle g, K_{\gamma(T)} - K_{\gamma(0)}\rangle_H.$ Hence, $A_f^* \Gamma_{\gamma} = K_{\gamma(T)} - K_{\gamma(0)}$, where $K_x := K(\cdot,x)$, and $ K:\mathbb{R}^n\times\mathbb{R}^n\to\mathbb{R} $ denotes the kernel function of the RKHS. This relationship between $A_f$ and $K$ allows for the construction of a finite rank representation of $A_f$ using the trajectories of the system as the central unit of information.

The advantage gained by taking the occupation kernel perspective is two fold. The dynamics are now free from the requirements of forward completeness, and instead, the dynamics are required to provide a densely defined Liouville operator over a given RKHS. Different selections of RKHSs yield new classes of dynamics that meet these requirements, which adds considerably to the flexibility of the method. The other advantage is that by the nature of occupation kernels as representatives of integration, they take the form of $\Gamma_{\theta}(x) = \int_0^T K(x,\theta(t)) dt.$ Thus, $\Gamma_{\theta}$ interprets the trajectory data through an integral and is robust to sensor noise \cite{rosenfeld2019occupation}.

One limitation still present in the theory of data driven methods for dynamical systems is that of high order dynamics. Conventionally in systems theory, higher order dynamics are converted to first order systems of augmented state variables. For example, $\ddot x = f(x)$ can be adjusted to $z := \begin{pmatrix} x & \dot x\end{pmatrix}^T$ with $\dot z = \begin{pmatrix} z_2 & f(z_1)\end{pmatrix}^T.$ Theoretically, the augmentation is well justified, but it is computationally problematic in data driven methods. To estimate the new state variable of $z$, data driven methods must compute an approximation of the first derivative of $x$. Numerical derivatives can be noisy, and if the order of the system exceeds $2$, they are unreliable. For example, in \cite{brunton2016discovering}, where numerical differentiation is used for parameter fitting, a considerable amount of filtering was required to get good results. The sensitivity of numerical differentiation techniques to noise motivates the development of methods that avoid numerical differentiation altogether.

This manuscript introduces the necessary theoretical components for the development of a DMD routine for second order dynamical systems that avoids the use of numerical derivatives and augmented state variables. The exposition will be focused on second order dynamical systems. However, the developed methods may be readily adapted to higher order dynamical systems. Underlying the subsequent development are vector valued Reproducing Kernel Hilbert spaces (vvRKHSs), for which the relevant theory is presented in Section \ref{sec:vvRKHS}. Using vvRKHSs as a tool, Section \ref{sec:vectorvaluedframework} develops a signal valued RKHS, which is a Hilbert space of functions that map $d$ times continuously differentiable signals to a scalar valued RKHS over $[0,T]$. The signal valued RKHS framework allows for the formulation of well defined second order Liouville operators over the Hilbert space beyond the formal expression given in Section \ref{sec:hoLiouville}. Once the essential elements are established, Section \ref{sec:dmdsecondorder} presents a DMD method for the modeling of a second order dynamical system, which avoids the use of numerical derivatives.

\section{Vector Valued Reproducing Kernel Hilbert Spaces}\label{sec:vvRKHS}

This section presents the concept of vector valued RKHSs, which recently came to prominence with \cite{SCC.Carmeli.DeVito.ea2010}, though their roots extend further back (e.g. \cite{pedrick1957theory}). In the context of this manuscript the Hilbert space $\mathcal{Y}$ will be a scalar valued RKHS, which will facilitate the description of a function space on signals.

\begin{definition}Given a Hilbert space $\mathcal{Y}$ and a set $X$, a vector valued reproducing kernel Hilbert space, $H$, is a Hilbert space of functions mapping $X$ to $\mathcal{Y}$, where for each $x \in X$ and $v \in \mathcal{Y}$ the functional $g \mapsto \langle g(x), v \rangle_{\mathcal{Y}}$ is bounded.
\end{definition}

The Riesz representation theorem guarantees for each $x \in X$ and $v \in \mathcal{Y}$ the existence of a function $K_{x,v} \in H$ such that $\langle g, K_{x,v} \rangle_{H} = \langle g(x), v \rangle_{\mathcal{Y}}$ for all $g \in H$. It is readily apparent that the map $K_{x}:\mathcal{Y}\to H$, that maps $v$ to $K_{x,v}$, is linear, and as such, is expressed as $K_{x}v := K_{x,v}$.

%It should be noted that for each fixed $v \in \mathcal{Y}$, the collection of functions $\{ h : h(x) = \langle g(x), v \rangle_{\mathcal{Y}} \}$ is a scalar valued RKHS.

\section{Problem Statement}\label{sec:problemstatement}

Given a collection of trajectories, $\{ \gamma_{i}:[0,T]\to \mathbb{R}^n \}_{i=1}^M$, corresponding to a second order dynamical system $\ddot \gamma = f(\gamma)$, where $f$ is unknown, we want to determine a model for a trajectory starting at $x(0)=x_0^0$ and $\dot x(0) = x_0^1$. In the following, the model is constructed from a finite rank representation of a second order Liouville operator, $B_f$, obtained via adjoint relations between the $B_f$, and a collection of occupation kernels.

\section{Signal valued RKHSs}\label{sec:vectorvaluedframework}
% {\color{black} Hey Joel, I think this could be stated more clearly, but feel free to ignore. For exposition I would move the examples till after the proof of this theorem.

% \begin{theorem} Let $X$ be a vector space of functions and let $\tilde{H}$ be a scalar valued RKHS. Moreover, suppose there exists a normed vector-space $\mathcal{Y}$ where the composition operator $C_{\theta}: \tilde{H}\rightarrow \mathcal{Y}$ is a bounded operator for all symbols $\theta\in X$. Define the vector space 
% \[H(X):=\left\{\phi_g \ :\  g\in \tilde{H}\right\}\]
% of mappings $\phi_g:X\rightarrow \mathcal{Y}$ given by $\phi_g(\theta):=g(\theta(\cdot))$, together with an inner product induced by $\tilde{H}$, i.e. \[\langle \phi_{g_1},\phi_{g_2}\rangle=\langle g_1, g_2\rangle_{\tilde{H}}.\]
% Then $H(X)$ is a vector-valued reproducing kernel Hilbert space. 
% \end{theorem}

% }
The objective of this section is to provide a definition of \emph{signal valued} RKHSs. These Hilbert spaces consist of functions that map from signals in $C^{d}\left([0,T],\mathbb{R}^n\right)$ to signals in $\mathcal{Y}$, which itself is a scalar valued RKHS over $[0,T]$. In contrast to scalar valued RKHSs which consist of functions acting on a state space, signal valued RKHSs treat a signal from $C^{d}\left([0,T],\mathbb{R}^n\right)$ as the fundamental unit of information. This allows for the treatment of operators acting on functions of signals, which include higher order Liouville operators. \begin{wrapfigure}[13]{r}{0.5\textwidth}
    \centering
    \includegraphics[width = 0.48\textwidth]{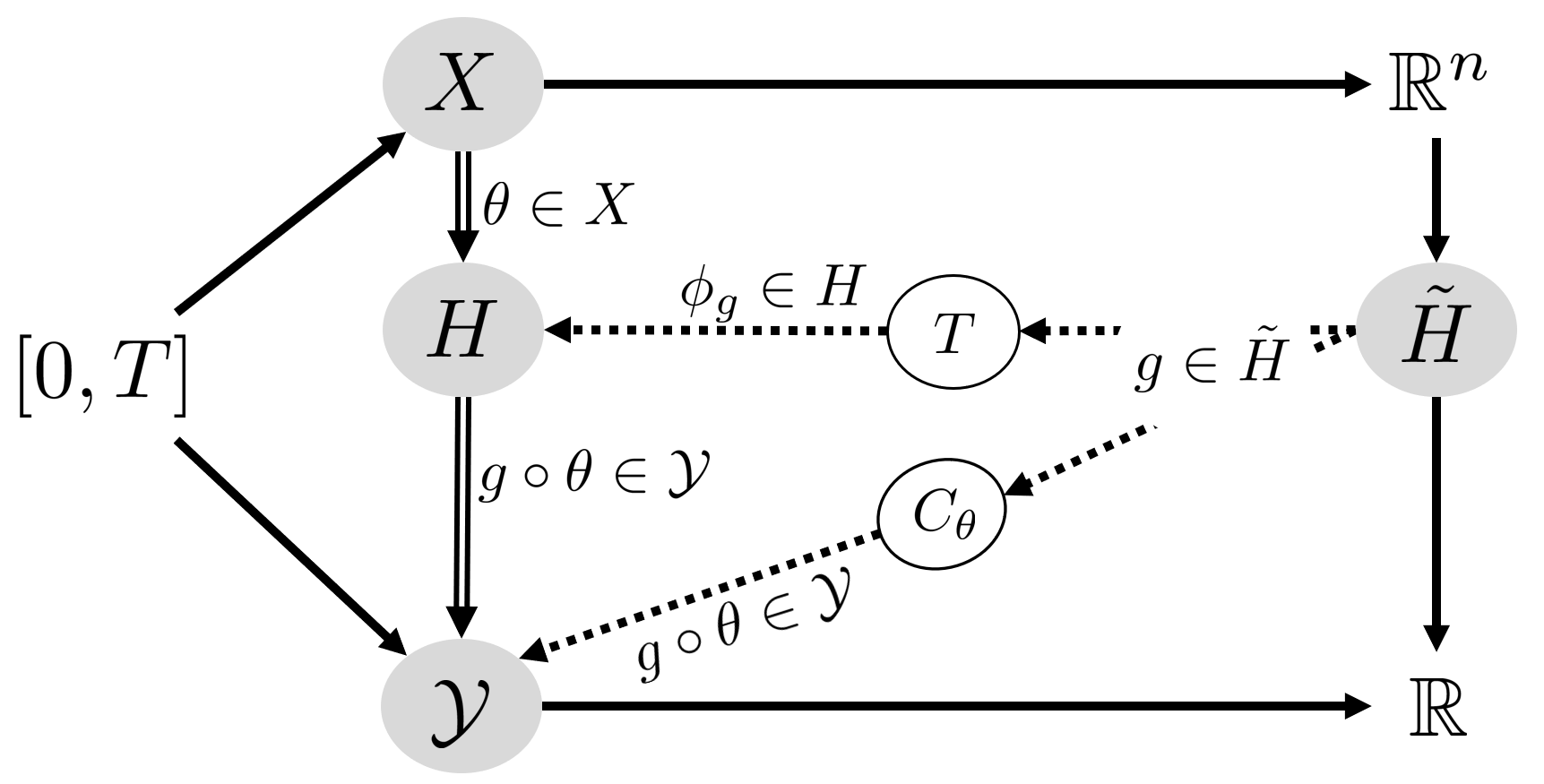}
    \caption{A visualization of relationships between vector spaces and operators defined in Theorem \ref{thm:construction}.}
    \label{fig:FunctionSpaces}
\end{wrapfigure}
Signal valued RKHSs will be derived as a special case of vector valued RKHSs, and arise from a mapping on scalar valued RKHSs.

In the following, three different RKHSs are under consideration (see Figure \ref{fig:FunctionSpaces}). The range space, $\mathcal{Y}$, is selected to be a scalar valued RKHS over $[0,T]$, with kernel function $\mathscr{K}$. To construct a vvRKHS of functions that map signals from $C^{d}\left([0,T],\mathbb{R}^n\right)$ (or a suitable substitute) to $\mathcal{Y}$, we define an auxiliary scalar valued RKHS, $\tilde H$, consisting of twice continuously differentiable functions from $\mathbb{R}^n$ to $\mathbb{R}$. For each $g \in \tilde H$, a map from $C^{d}\left([0,T],\mathbb{R}^n\right)$ to $\mathcal{Y}$ is obtained as $\phi_g[\theta](t) := g(\theta(t))$ for all $\theta \in C^{d}\left([0,T],\mathbb{R}^n\right)$ and $t \in [0,T]$. Theorem \ref{thm:construction} shows that the space of all such maps is a vvRKHS.

\begin{theorem}\label{thm:construction} Let $X = C^{d}\left([0,T],\mathbb{R}^n\right)$ for some $d \in \mathbb{N}$, and let $\tilde{H}$ be a scalar valued RKHS over $\mathbb{R}^n$. Moreover, suppose there exists a RKHS $\mathcal{Y}$ over $[0,T]$ where the composition operator $C_{\theta}: \tilde{H}\rightarrow \mathcal{Y}$ is a bounded operator for all symbols $\theta\in X$. Define the vector space 
$H(X):=\{\phi_g \ :\  g\in \tilde{H}\}$
of mappings $\phi_g:X\rightarrow \mathcal{Y}$ given by $\phi_g[\theta]:=g(\theta(\cdot))$, together with the inner product induced by $\tilde{H}$, $\langle \phi_{g_1},\phi_{g_2}\rangle_H=\langle g_1, g_2\rangle_{\tilde{H}}.$
Then $H(X)$ is a vvRKHS. 
\end{theorem}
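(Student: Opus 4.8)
The plan is to verify the two things a vector valued RKHS requires: first, that $H(X)$ is a genuine Hilbert space under the proposed inner product, and second, that for every $\theta \in X$ and every $v \in \mathcal{Y}$ the evaluation-type functional $\phi_g \mapsto \langle \phi_g[\theta], v\rangle_{\mathcal{Y}}$ is bounded on $H(X)$. The map $g \mapsto \phi_g$ from $\tilde H$ to $H(X)$ is surjective by construction, so the first step is to check it is injective; then the induced inner product is well defined and the map becomes an isometric isomorphism, so $H(X)$ inherits completeness from $\tilde H$ automatically.

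For injectivity I would argue that if $\phi_g \equiv 0$, then $g(\theta(t)) = 0$ for all $t \in [0,T]$ and all $\theta \in C^d([0,T],\mathbb{R}^n)$; taking constant signals $\theta \equiv x$ for each $x \in \mathbb{R}^n$ forces $g \equiv 0$ on $\mathbb{R}^n$. (One should note that constant signals lie in $C^d$, so this is legitimate; this is the one place the specific choice $X = C^d([0,T],\mathbb{R}^n)$ is used, and it is the only subtle point in the proof.) Hence $g \mapsto \phi_g$ is a bijection, the formula $\langle \phi_{g_1},\phi_{g_2}\rangle_H := \langle g_1,g_2\rangle_{\tilde H}$ unambiguously defines an inner product on $H(X)$, and $H(X)$ is a Hilbert space isometrically isomorphic to $\tilde H$.

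For the reproducing property, fix $\theta \in X$ and $v \in \mathcal{Y}$ and estimate
\[
\bigl|\langle \phi_g[\theta], v\rangle_{\mathcal{Y}}\bigr| = \bigl|\langle g(\theta(\cdot)), v\rangle_{\mathcal{Y}}\bigr| = \bigl|\langle C_\theta g, v\rangle_{\mathcal{Y}}\bigr| \le \|C_\theta\|\,\|g\|_{\tilde H}\,\|v\|_{\mathcal{Y}} = \|C_\theta\|\,\|v\|_{\mathcal{Y}}\,\|\phi_g\|_H,
\]
where the inequality uses Cauchy–Schwarz together with the standing hypothesis that the composition operator $C_\theta : \tilde H \to \mathcal{Y}$ is bounded, and the last equality uses that $g \mapsto \phi_g$ is an isometry. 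This shows the functional is bounded on $H(X)$ with norm at most $\|C_\theta\|\,\|v\|_{\mathcal{Y}}$, which is exactly the defining property of a vvRKHS with range space $\mathcal{Y}$. By Riesz representation one then obtains the kernel elements $K_{\theta,v} \in H(X)$; if desired, unwinding the isometry identifies $K_{\theta,v}$ with $\phi_{C_\theta^* v}$, i.e. the function $x \mapsto (C_\theta^* v)(x)$ lifted into $H(X)$, though this explicit identification is not needed for the statement.

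The main obstacle is essentially bookkeeping rather than depth: one must be careful that the inner product on $H(X)$ is well defined, which hinges entirely on the injectivity of $g \mapsto \phi_g$, and that in turn relies on $X$ containing enough signals (the constant ones) to separate points of $\tilde H$. Everything after that is a one-line application of boundedness of $C_\theta$.
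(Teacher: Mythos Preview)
Your proposal is correct and follows essentially the same route as the paper: injectivity of $g\mapsto\phi_g$ via constant signals, completeness by transporting the Hilbert space structure of $\tilde H$ through the resulting isometric bijection, and boundedness of the evaluation functional via Cauchy--Schwarz combined with the assumed boundedness of $C_\theta$. The explicit identification $K_{\theta,v}=\phi_{C_\theta^* v}$ you add at the end is a nice extra that the paper does not spell out.
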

% \begin{theorem}The Hilbert space, $H$, obtained as the collection $H := \{ \phi_g : g \in \tilde H \}$ as described above, is a vvRKHS of functions mapping $C^{2}\left([0,T],\mathbb{R}^n\right)$ to $\mathcal{Y}$ provided that for all $\theta \in C^{2}\left([0,T],\mathbb{R}^n\right)$ there exists a $C_\theta$ such that $\|g\circ \theta\|_{\mathcal{Y}} \le \|g\|_{\tilde H} C_\theta$ for all $g \in \tilde H$.
% \end{theorem}

\begin{proof}
Let $\mathcal{T}$ be the mapping from $\tilde H$ to $H$ given as $\mathcal{T}g = \phi_g$. It is immediate that $\mathcal{T}$ is an injection into $H$. Indeed if $g_1 \neq g_2$ for $g_1$ and $g_2$ in $\tilde H$, then there is a point $\omega \in \mathbb{R}^n$ such that $g_1(\omega) \neq g_2(\omega)$. Letting $\theta_\omega \in C^{d}\left([0,T],\mathbb{R}^n\right)$ denote the constant signal $\theta_\omega(t) \equiv \omega$, then $\phi_{g_1}(\theta_\omega) \neq \phi_{g_2}(\theta_\omega)$. {\color{black}Selecting the inner product on $H$ to be $\langle \phi_{g_1}, \phi_{g_2} \rangle_H = \langle g_1, g_2 \rangle_{\tilde H}$, $H$ can be seen to be complete with respect to the induced norm. Since $\tilde{H}$ is a Hilbert space, it is complete by definition. Hence, a Cauchy sequence in $H$, say $\{\phi_{g_n}\}$ gives rise to a Cauchy sequence in $\tilde{H}$, namely $\{g_n\}$. Since $\tilde{H}$ is complete, the sequence converges to a $g\in \tilde{H}$ and thus $\phi_{g_n}$ converges to $\phi_g$. Since $H=\mathcal{T}(\tilde{H})$, $\mathcal{T}$ is surjective.} What is left to resolve is to demonstrate that for any $\theta \in C^{d}\left([0,T],\mathbb{R}^n\right)$ and $v \in \mathcal{Y}$, the functional $\psi \mapsto \langle \psi[\theta], v \rangle_{\mathcal{Y}}$ is bounded. Note that there is a $g \in \tilde H$ such that $\mathcal{T}g = \psi$ and $\|\psi\|_H = \|g\|_{\tilde H}$. Now consider $|\langle \psi[\theta], v \rangle_{\mathcal{Y}}| = |\langle g \circ \theta, v \rangle_{\mathcal{Y}}| \le \| g \circ \theta \|_{\mathcal{Y}} \| v \|_{\mathcal{Y}} \le {\color{black}\|C_\theta\| \|g\|_{\tilde H} \| v\|_{\mathcal{Y}} = \|C_\theta\| \|\psi\|_{H} \| v\|_{\mathcal{Y}}.}$
\end{proof}

\begin{definition}
The vvRKHS, $H$, given in Theorem \ref{thm:construction} will be called the signal valued RKHS from $C^{d}\left([0,T],\mathbb{R}^n\right)$ to $\mathcal{Y}$ derived from $\tilde H$, more succinctly a signal valued RKHS, when the other quantities are understood from context.
\end{definition}

{\color{black}While a general characterization of pairs of RKHSs $(\tilde{H}, \mathcal{Y})$ that admit bounded composition operators $C_\theta$ is difficult, the following example analyzes one such pair.}

\begin{example}\label{ex:GaussianSobolev}
A possible choice for $\tilde H$ would be the native space of the Gaussian RBF kernel function, $\tilde K(x,y) = \exp\left(-\frac{\|x-y\|_2^2}{\mu}\right)$. Letting $\mathcal{Y}$ be the Sobolev space $H^1$, it can be seen that $\phi_g[\theta] \in \mathcal{Y}$ for all $\theta \in C^{2}\left([0,T],\mathbb{R}^n\right)$. This follows since $g$ is infinitely differentiable and $\phi_g[\theta](t) = g(\theta(t))$ must then be twice continuously differentiable.
\end{example}

\begin{corollary}\label{cor:GaussianSobolev}Example \ref{ex:GaussianSobolev} provides a vvRKHS of functions from $C^{2}\left([0,T],\mathbb{R}^n\right)$ to $H^1$, and it is a signal valued RKHS.\end{corollary}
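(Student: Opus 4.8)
The plan is to verify that the hypotheses of Theorem~\ref{thm:construction} are met for the pair $(\tilde H, \mathcal{Y})$ described in Example~\ref{ex:GaussianSobolev}, and then invoke Theorem~\ref{thm:construction} verbatim. Concretely, I would take $\tilde H$ to be the native space of the Gaussian RBF kernel $\tilde K(x,y) = \exp(-\|x-y\|_2^2/\mu)$ on $\mathbb{R}^n$, take $\mathcal{Y} = H^1([0,T])$, and take $X = C^2([0,T],\mathbb{R}^n)$, i.e.\ $d=2$. The single thing that needs to be checked is that the composition operator $C_\theta : \tilde H \to \mathcal{Y}$, $C_\theta g = g \circ \theta$, is well defined and bounded for every $\theta \in X$.

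First I would argue that $C_\theta$ maps into $\mathcal{Y}$: every $g$ in the Gaussian native space is real-analytic (in particular $C^\infty$) on $\mathbb{R}^n$, and $\theta \in C^2$, so $g \circ \theta \in C^2([0,T]) \subset H^1([0,T])$, as already noted in Example~\ref{ex:GaussianSobolev}; hence $\phi_g[\theta] \in \mathcal{Y}$. Second, and this is the substantive step, I would establish boundedness of $C_\theta$. The clean way is to observe that $C_\theta$ is an everywhere-defined linear operator between Hilbert spaces and to invoke the closed graph theorem: if $g_n \to g$ in $\tilde H$ and $C_\theta g_n \to h$ in $H^1([0,T])$, then since norm convergence in the Gaussian RKHS implies uniform convergence of $g_n$ (and of all derivatives, on compacta) via the reproducing property $|g_n(x) - g(x)| \le \|g_n - g\|_{\tilde H}\,\tilde K(x,x)^{1/2}$, we get $g_n \circ \theta \to g \circ \theta$ uniformly on $[0,T]$; and $H^1$-convergence implies $L^2$-convergence, so $h = g\circ\theta$ a.e., forcing $h = C_\theta g$. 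Thus the graph is closed and $C_\theta$ is bounded. (Alternatively one can produce an explicit bound: writing $g(x) = \langle g, \tilde K_x\rangle_{\tilde H}$ one has $g(\theta(t)) = \langle g, \tilde K_{\theta(t)}\rangle$ and $\frac{d}{dt} g(\theta(t)) = \langle g, \partial_t \tilde K_{\theta(t)}\rangle$ where $\partial_t \tilde K_{\theta(t)}$ is a $\tilde H$-valued $C^1$ curve because $\tilde K$ is smooth and $\theta \in C^1$; then $\|g\circ\theta\|_{H^1}^2 = \int_0^T (|g(\theta(t))|^2 + |\tfrac{d}{dt}g(\theta(t))|^2)\,dt \le \|g\|_{\tilde H}^2 \int_0^T (\|\tilde K_{\theta(t)}\|_{\tilde H}^2 + \|\partial_t \tilde K_{\theta(t)}\|_{\tilde H}^2)\,dt$, and the integral is finite since $t \mapsto \theta(t)$ ranges over a compact set and the relevant kernel quantities are continuous in $t$, giving $\|C_\theta\| < \infty$.)

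With $C_\theta$ shown to be bounded for every $\theta \in C^2([0,T],\mathbb{R}^n)$, Theorem~\ref{thm:construction} applies directly with $d = 2$: the space $H(X) = \{\phi_g : g \in \tilde H\}$ equipped with $\langle \phi_{g_1}, \phi_{g_2}\rangle_H = \langle g_1, g_2\rangle_{\tilde H}$ is a vvRKHS of functions from $C^2([0,T],\mathbb{R}^n)$ to $H^1([0,T])$, and by the Definition following Theorem~\ref{thm:construction} it is precisely the signal valued RKHS from $C^2([0,T],\mathbb{R}^n)$ to $H^1$ derived from $\tilde H$. This is exactly the assertion of the corollary.

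I expect the main obstacle to be the boundedness of $C_\theta$, since everything else is bookkeeping around Theorem~\ref{thm:construction}. The closed graph argument is short but hinges on the fact that $\tilde H$-norm convergence controls pointwise (hence, after composition with the continuous $\theta$, uniform) convergence on the compact set $\theta([0,T])$ — which is a standard feature of RKHSs — together with the elementary fact that $H^1$-convergence implies $L^2$-convergence so the two candidate limits must agree a.e. If one prefers the explicit-bound route, the only care needed is to justify differentiating under the inner product, i.e.\ that $t \mapsto \tilde K_{\theta(t)}$ is a $C^1$ curve in $\tilde H$; this follows from smoothness of the Gaussian kernel and $\theta \in C^1$, and then continuity on the compact interval $[0,T]$ makes the resulting integral finite. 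Either way the step is routine for RKHS practitioners, so the corollary is essentially an immediate specialization of Theorem~\ref{thm:construction}.
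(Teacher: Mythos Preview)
Your proposal is correct. The parenthetical ``explicit bound'' route you sketch is exactly what the paper does: it writes $g(\theta(t)) = \langle g, \tilde K_{\theta(t)}\rangle_{\tilde H}$ and $\nabla g(\theta(t))\dot\theta(t) = \langle g, \nabla_2 \tilde K(\cdot,\theta(t))\dot\theta(t)\rangle_{\tilde H}$ (citing \cite{steinwart2008support} for the derivative reproducing property), applies Cauchy--Schwarz inside the $H^1$ norm integral, and then observes that $t\mapsto\|\tilde K_{\theta(t)}\|_{\tilde H}^2$ and $t\mapsto\|\nabla_2 \tilde K(\cdot,\theta(t))\dot\theta(t)\|_{\tilde H}^2$ are continuous on the compact interval $[0,T]$, so the integral is finite.

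Your primary route via the closed graph theorem is a genuinely different, and arguably cleaner, argument: it sidesteps the need to justify that $t\mapsto \tilde K_{\theta(t)}$ is a $C^1$ curve in $\tilde H$ (equivalently, that one may differentiate under the inner product), replacing it with the elementary RKHS fact that norm convergence dominates pointwise convergence together with the trivial embedding $H^1\hookrightarrow L^2$. The trade-off is that the paper's computation yields an explicit operator bound $\|C_\theta\|^2 \le \int_0^T(\|\tilde K_{\theta(t)}\|_{\tilde H}^2 + \|\nabla_2 \tilde K(\cdot,\theta(t))\dot\theta(t)\|_{\tilde H}^2)\,dt$, whereas the closed graph argument gives only existence of a bound; for the purposes of this corollary either suffices.
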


\begin{proof}
Let $\theta \in C^2([0,T],\mathbb{R}^n)$ and $g \in \tilde H$. The norm on $\mathcal{Y}$, which consists of absolutely continuous functions over $[0,T]$, is given as $\| v \|_{H^1}^2 = \int_0^T |v(t)|^2 + |v'(t)|^2 dt$ (cf. \cite{rosenfeld2015densely}).

Consider $\| g\circ \theta \|_{H^1}^2 = \int_0^T |g(\theta(t))|^2 + |\nabla g(\theta(t))\dot \theta(t)|^2 dt.$ Note that for each $t$, $g(\theta(t)) = \langle g, \tilde K_{\theta(t)} \rangle_{\tilde H}$ and {\color{black}$\nabla g(\theta(t))\dot\theta(t)= \langle g, \nabla_2 \tilde K(\cdot,\theta(t))\dot \theta(t) \rangle_{\tilde H}$ (see \cite[p. 132]{steinwart2008support}), where $\nabla_2$ denotes gradient with respect to the second argument and $\tilde{K}_y(x):=\tilde{K}(x,y)$, for $x,y \in \mathbb{R}^n$.} Hence, via Cauchy Schwarz $ \| g\circ \theta \|_{H^1}^2 \le \|g\|_{\tilde H}^2 \int_0^T \| \tilde K_{\theta(t)} \|_{\tilde H}^2 + \color{black}\| \nabla_2 \tilde K(\cdot,\theta(t))\dot \theta(t) \|_{\tilde H}^2 dt.$ Note that $\| \tilde K_{\theta(t)} \|_{\tilde H} = \sqrt{ \tilde K(\theta(t),\theta(t)) }$ and $\tilde K(x,y)$ is the Gaussian kernel. Hence, $\| \tilde K_{\theta(t)} \|_{\tilde H}$ is continuous, and as $\theta$ is continuous and $[0,T]$ is compact, $\int_0^T \| \tilde K_{\theta(t)} \|_{\tilde H}^2 dt$ is bounded. A similar argument reveals that $\int_0^T \color{black}\| \nabla_2 \tilde K(\cdot,\theta(t))\dot \theta(t) \|_{\tilde H}^2 dt$ is bounded. Thus the composition operator $g \mapsto g \circ \theta$ is bounded from $\tilde H$ to $\mathcal{Y}$.\end{proof}

DMD relies on the action of Liouville operators on occupation kernels. As such, it is necessary to define second order occupation kernels in the context of vvRKHSs of the form in Theorem \ref{thm:construction}. 
To motivate the definition of higher order occupation kernels, recall Cauchy's formula for iterated integrals given as $h^{(-m)}(T) = \frac{1}{(m-1)!} \int_0^T (T-t)^{m-1} h(t) dt,$ {\color{black}where $h^{(-m)}(t):= \int_0^t \int_0^{\tau_1}\cdots\int_0^{\tau_{m-1}} h(\tau_m)d\tau_m\cdots d\tau_2 d\tau_1$.}
For a RKHS of continuous functions, $\mathcal{Y}$, as given in Theorem \ref{thm:construction}, the mapping $h \mapsto h^{(-m)}(T)$ is a bounded functional. As a result, by the Reisz representation theorem, there exists $1^{(-m)} \in \mathcal{Y}$ such that $\langle h, 1^{(-m)} \rangle_{\mathcal{Y}} = \frac{1}{(m-1)!} \int_0^T (T-t)^{m-1} h(t) dt$. Note that for $ g \in \tilde{H} $, $\theta \in C^{d}\left([0,T],\mathbb{R}^n\right)$, and $\psi \in H$ such that $\psi = \mathcal{T}g$, the functional
\[
    \psi \mapsto \langle g \circ \theta, 1^{(-m)} \rangle_{\mathcal{Y}}  = \frac{1}{(m-1)!} \int_0^T (T-t)^{m-1} g(\theta(t)) dt
\]
is bounded. As such, by the Reisz representation theorem, there exists $ \Gamma^{(m)}_{\theta} \in H $ such that $\langle \psi, \Gamma^{(m)}_{\theta}\rangle_{H} = \langle g \circ \theta, 1^{(-m)} \rangle_{\mathcal{Y}}$. We define $ \Gamma^{(m)}_{\theta} $ as the \emph{$m$-th order occupation kernel corresponding to $\theta \in C^{d}\left([0,T],\mathbb{R}^n\right)$ in $H$}.

Due to the fact that
$\langle g \circ \theta, 1^{(-m)} \rangle_{\mathcal{Y}} = \langle \psi[\theta], 1^{(-m)} \rangle_{\mathcal{Y}} = \langle \psi, K_{\theta,1^{(-m)}}\rangle_{H},$
the $m$-th order occupation kernel corresponding to $\theta$ can be identified with the kernel function $K_{\theta,1^{(-m)}} \in H$ of the vvRKHS. 

Thus, in contrast with \cite{rosenfeld2019occupation}, where occupation kernels are integrals of the kernel function of an RKHS along trajectories, the $m$-th order occupation kernels defined here are a subset of the set of vector valued kernels in a vvRKHS.

\begin{example}As an example application of signal valued RKHSs, consider, for a function $f:\mathbb{R}^n \to \mathbb{R}^n$, the action of the Liouville operator on $\psi \in H$, given as $A_f\psi[\theta](t) = \nabla \psi[\theta](t)f[\theta](t)$, where $f[\theta](t) := f(\theta(t)),$ and $\nabla \psi[\theta] := \nabla g(\theta(\cdot))$ where $\mathcal{T}g = \psi$. Assume that $A_f$ is densely defined over $H$. If $\gamma:[0,T]\to\mathbb{R}^n$ satisfies $\dot \gamma = f(\gamma)$ it follows that 
\begin{gather*}\langle A_f \psi, \Gamma^{(1)}_\gamma \rangle_{H} = \int_0^T \nabla \psi[\gamma](t) f[\gamma](t) = \int_0^T \nabla g(\gamma(t)) f(\gamma(t)) dt = \int_0^T \frac{d}{dt} g(\gamma(t)) dt =\\ g(\gamma(T)) - g(\gamma(0)) = \psi[\gamma](T) - \psi[\gamma](0) = \langle \psi, K_{\gamma,\mathscr{K}_T} - K_{\gamma,\mathscr{K}_0} \rangle_H,\end{gather*}
{\color{black}where, $\mathscr{K}_t(s):=\mathscr{K}(s,t)$ for $t,s\in[0,T]$.} Hence, $A_f^* \Gamma^{(1)}_\gamma =  K_{\gamma,\mathscr{K}_T} - K_{\gamma,\mathscr{K}_0},$ which coincides with the relation between occupation kernels and Liouville operators over RKHSs \cite{rosenfeld2019occupation}. 
\end{example}

\section{Higher Order Liouville Operators and Occupation Kernels}\label{sec:hoLiouville}

The structure of Liouville operators, given formally as $A_{f} g(x) = \nabla g(x) f(x)$, derive their form from the orbital derivative. In particular, suppose that $\gamma:[0,T] \to \mathbb{R}^n$ satisfies $\dot \gamma = f(\gamma)$, then $A_f g(\gamma(t)) = \nabla g(\gamma(t)) \dot\gamma(t) = \frac{d}{dt} g(\gamma(t)).$ Consequently, higher order Liouville operators may be derived via the same process, where $g$ is composed with $\gamma$ and higher order derivatives with respect to time are taken. To wit, letting $\mathcal{H}[g]$ denote the Hessian of $g:\mathbb{R}^n\to \mathbb{R}$, 
\[ \frac{d^2}{dt^2} g \circ \gamma (t) = \dot \gamma(t)^T \mathcal{H}[g](\gamma(t)) \dot \gamma(t) + \nabla g(\gamma(t)) \ddot \gamma(t).\]
In the case that $\gamma$ satisfies $\ddot \gamma = f(\gamma)$, this becomes
{
    \medmuskip=0mu
    \thinmuskip=0mu
    \thickmuskip=0mu
    \begin{gather*} \frac{d^2}{dt^2} g \circ \gamma (t) = \nabla g(\gamma(t)) f(\gamma(t)) + \left(\dot\gamma(0) + \int\limits_0^t f(\gamma(\tau)) d\tau\right)^T \mathcal{H}[g](\gamma(t)) \left(\dot\gamma(0) + \int\limits_0^t f(\gamma(\tau)) d\tau\right).
    \end{gather*}
}

Fixing $H$ as a signal valued RKHS as in Theorem \ref{thm:construction}, let $f:\mathbb{R}^n \to \mathbb{R}^n$ be the symbol for a second order Liouville operator, $B_f : \mathcal{D}(B_f) \to H$, defined as
{
    \medmuskip=0mu
    \thinmuskip=0mu
    \thickmuskip=0mu
    \begin{gather}
        B_f \psi[\theta](t) := \nabla \psi[\theta](t) f(\theta(t))  + 
        \left(\dot\theta(0) + \int_0^t f(\theta(\tau)) d\tau\right)^T \mathcal{H}\psi[\theta](t) \left(\dot\theta(0) + \int_0^t f(\theta(\tau)) d\tau\right) \label{eq:second-order-liouville} ,
    \end{gather}
}
where $\mathcal{D}(B_f)$ is precisely the collection of $\psi$ for which $B_f\psi \in H$. Note that since $\psi = \phi_g$ for some $g \in \tilde H$, $\frac{\partial}{\partial x_i} \psi[\theta](t)$ is defined as $\frac{\partial}{\partial x_i} g(\theta(t))$ for $i = 1,\ldots,n$, which facilitates the definitions of the gradient and Hessian of $\psi$. Owing to the integral appearing in \eqref{eq:second-order-liouville}, the operator $B_f$ needs to be posed over a Hilbert space consisting of functions of trajectories. Additionally, in contrast to the first order Liouville operator, $B_f$ is linear in $\psi$ but not in the symbol, $f$. 

The operator $B_f$ is connected to second order occupation kernels in the following manner. If $\ddot \gamma = f(\gamma)$, then $\langle B_f \psi, \Gamma^{(2)}_\gamma \rangle_H=\int_0^T (T-t) B_f \psi[\gamma](t) dt =
\int_0^T (T-t) \ddot \psi[\gamma](t) dt =\\  \psi[\gamma](T) - \psi[\gamma](0) - T \nabla \psi[\gamma](0)\dot\gamma(0) = \langle \psi, K_{\gamma,\mathscr{K}_T} - K_{\gamma,\mathscr{K}_0} - T K_{\gamma,\mathscr{K}'_0}\rangle_H,$ {\color{black}where $ \mathscr{K}'_0 := s\mapsto \left.\nicefrac{d(\mathscr{K}(s,t))}{dt}\right\vert_{t=0} \in \mathcal{Y} $.}

Hence, the functional $\psi \mapsto \langle B_f \psi, \Gamma^{(2)}_\gamma \rangle_H$ is bounded, and the following proposition is established.
\begin{proposition}\label{prop:gammainadjoint}Let $f$ be the symbol for a densely defined\footnote{An operator $B_f:\mathcal{D}(B_f)\to H $ is called densely defined if $\mathcal{D}(B_f)$ is a dense subset of $H$.} second order Liouville operator, $B_f$, over a signal valued RKHS and $\gamma \in C^{2}\left([0,T],\mathbb{R}^n\right)$ be such that $\ddot \gamma = f(\gamma)$. Then, $\Gamma^{(2)}_\gamma \in \mathcal{D}(B_f^*)$, and
$B_f^* \Gamma_\gamma^{(2)} = K_{\gamma,\mathscr{K}_T} - K_{\gamma,\mathscr{K}_0} - T K_{\gamma,\mathscr{K}'_0}.$
\end{proposition}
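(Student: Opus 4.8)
The plan is to verify directly that the linear functional $\psi \mapsto \langle B_f\psi, \Gamma^{(2)}_\gamma\rangle_H$ agrees, on all of $\mathcal{D}(B_f)$, with $\psi \mapsto \langle \psi, w\rangle_H$ for the fixed element $w := K_{\gamma,\mathscr{K}_T} - K_{\gamma,\mathscr{K}_0} - T K_{\gamma,\mathscr{K}'_0}\in H$; boundedness of this functional (with constant $\|w\|_H$) then places $\Gamma^{(2)}_\gamma$ in $\mathcal{D}(B_f^*)$, and since $B_f$ is densely defined the adjoint is single-valued, so $B_f^* \Gamma^{(2)}_\gamma = w$. Recall that $\Gamma^{(2)}_\gamma$ is, by definition, the Riesz representative in $H$ of $\psi \mapsto \langle \psi[\gamma], 1^{(-2)}\rangle_{\mathcal{Y}}$, which by Cauchy's iterated-integral formula with $m=2$ (so that $\tfrac{1}{(m-1)!}=1$ and $(T-t)^{m-1}=T-t$) equals $\int_0^T (T-t)\,\psi[\gamma](t)\,dt$. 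Hence $\langle B_f\psi, \Gamma^{(2)}_\gamma\rangle_H = \int_0^T (T-t)\,(B_f\psi)[\gamma](t)\,dt$.

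The first key step is to show that along the trajectory $\gamma$ the integrand $(B_f\psi)[\gamma](t)$ equals $\tfrac{d^2}{dt^2}(g\circ\gamma)(t)$, where $g\in\tilde H$ is the unique element with $\psi=\phi_g$. Writing $h := g\circ\gamma = \psi[\gamma]$, the chain-rule computation of Section~\ref{sec:hoLiouville} gives $\ddot h(t) = \dot\gamma(t)^T\mathcal{H}[g](\gamma(t))\dot\gamma(t) + \nabla g(\gamma(t))\ddot\gamma(t)$; substituting $\ddot\gamma = f(\gamma)$ together with $\dot\gamma(t) = \dot\gamma(0) + \int_0^t f(\gamma(\tau))\,d\tau$, and comparing with the definition \eqref{eq:second-order-liouville}, yields $(B_f\psi)[\gamma](t) = \ddot h(t)$. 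Since $\tilde H$ consists of twice continuously differentiable functions and $\gamma \in C^{2}\left([0,T],\mathbb{R}^n\right)$, the function $h$ is genuinely $C^2$ on $[0,T]$, so two integrations by parts are legitimate and give $\int_0^T (T-t)\ddot h(t)\,dt = h(T) - h(0) - T\dot h(0)$.

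The remaining step is to re-express $h(T)$, $h(0)$, and $\dot h(0)$ as inner products in $H$ against kernel functions. Using the reproducing property of $\mathcal{Y}$ one has $h(T) = \psi[\gamma](T) = \langle \psi[\gamma], \mathscr{K}_T\rangle_{\mathcal{Y}}$ and $h(0) = \langle \psi[\gamma], \mathscr{K}_0\rangle_{\mathcal{Y}}$, while the standard RKHS fact that point evaluation of the derivative is represented by the corresponding partial derivative of the kernel gives $\dot h(0) = \langle \psi[\gamma], \mathscr{K}'_0\rangle_{\mathcal{Y}}$ with $\mathscr{K}'_0$ as defined in the statement. Applying the defining relation of the vvRKHS kernel, $\langle \psi[\gamma], v\rangle_{\mathcal{Y}} = \langle \psi, K_{\gamma,v}\rangle_H$, with $v \in \{\mathscr{K}_T,\, \mathscr{K}_0,\, \mathscr{K}'_0\}$ and combining, one obtains $\langle B_f\psi, \Gamma^{(2)}_\gamma\rangle_H = \langle \psi,\, K_{\gamma,\mathscr{K}_T} - K_{\gamma,\mathscr{K}_0} - T K_{\gamma,\mathscr{K}'_0}\rangle_H$ for every $\psi \in \mathcal{D}(B_f)$, which is the claim.

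I expect the only genuinely delicate point to be the identity $(B_f\psi)[\gamma](t) = \ddot h(t)$: it requires that $\psi$ really is of the form $\phi_g$ with $g$ twice continuously differentiable (so that the gradient and Hessian of $\psi$ are the honest gradient and Hessian of $g$ evaluated along $\gamma$) and that the antiderivative representation $\dot\gamma(t) = \dot\gamma(0) + \int_0^t f(\gamma(\tau))\,d\tau$ holds — both of which are built into the setup, the latter being equivalent to $\ddot\gamma = f(\gamma)$ for $\gamma \in C^{2}\left([0,T],\mathbb{R}^n\right)$. Once that identification is in hand, the rest is a routine double integration by parts together with bookkeeping with reproducing kernels.
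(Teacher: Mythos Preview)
Your proposal is correct and follows essentially the same route as the paper: the paper computes $\langle B_f\psi,\Gamma^{(2)}_\gamma\rangle_H = \int_0^T (T-t)\,B_f\psi[\gamma](t)\,dt = \int_0^T (T-t)\,\ddot\psi[\gamma](t)\,dt = \psi[\gamma](T)-\psi[\gamma](0)-T\nabla\psi[\gamma](0)\dot\gamma(0)$ and then rewrites this as an inner product against $K_{\gamma,\mathscr{K}_T}-K_{\gamma,\mathscr{K}_0}-TK_{\gamma,\mathscr{K}'_0}$, concluding boundedness and hence the adjoint identity. Your write-up simply fills in more of the justification (the chain-rule identification $(B_f\psi)[\gamma]=\ddot h$, the legitimacy of integration by parts, and the single-valuedness of the adjoint from dense definition) that the paper leaves implicit.
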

\section{Dynamic Mode Decompositions for Second Order Dynamical Systems}\label{sec:dmdsecondorder}

The objective of this section is to give a data driven model for a state governed by an unknown second order dynamical system. The development follows that of occupation kernel DMD detailed in \cite{rosenfeldDMD}.

The approach is to determine a finite rank representation of $B_f$ over $H$ and to perform an eigendecomposition on this representation to obtain eigenfunctions and eigenvectors for the representation. Following this, the full state observable is decomposed with respect to the eigenfunctions, which ultimately allows for a model to be extracted for the dynamical system.

Suppose that $\varphi \in \mathcal{D}(B_f)$ is an eigenfunction for $B_f$ with eigenvalue $\lambda.$ Then for $\ddot \gamma = f(\gamma)$, $\ddot \varphi[\gamma](t) = B_f \varphi[\gamma](t) = \lambda \varphi[\gamma](t)$. Hence,
\begin{gather*}
\varphi[\gamma](t) = \frac{1}{2} \left( \varphi[\gamma](0) + \frac{\nabla \varphi[\gamma](0) \dot \gamma(0)}{\sqrt{\lambda}} \right) e^{ \sqrt{\lambda} t} + \frac12 \left( \varphi[\gamma](0) - \frac{\nabla \varphi[\gamma](0) \dot \gamma(0)}{\sqrt{\lambda}} \right) e^{- \sqrt{\lambda} t}.\end{gather*}
%Let $g_{id}(x) = x$ be the full state observable in \cite{budivsic2012applied}, and
The full state observable for the signal valued case is then given as $\psi_{id}[\theta] = \theta$. The objective is to decompose each dimension of the full state observable with respect to an eigenbasis, $\{ \varphi_i \}_{i=1}^\infty$ with eigenvalues $\{ \lambda_i \}_{i=1}^\infty$, of $B_f$, provided that one exists, so as to express $\psi_{id}[\gamma](t)$ as
{\color{black}\begin{gather}
    \psi_{id}[\gamma](t) = \gamma(t) = \lim_{M\to\infty}\sum_{m=1}^M \xi_{m,M} \left(\frac{1}{2} \left( \varphi_m[\gamma](0) + \frac{\nabla \varphi_m[\gamma](0) \dot \gamma(0)}{\sqrt{\lambda_m}} \right) e^{ \sqrt{\lambda_m} t}\right.\nonumber\\
+ \left.\frac12 \left( \varphi_m[\gamma](0) - \frac{\nabla \varphi_m[\gamma](0) \dot \gamma(0)}{\sqrt{\lambda_m}} \right) e^{- \sqrt{\lambda_m} t}\right).\label{eq:dmd}
\end{gather}
Since the eigenbasis may not be orthogonal, addition of each new eigenfunction to the linear combination in \eqref{eq:dmd} may affect the coefficients corresponding to all other eigenfunctions. This dependence of the coefficients on the collection of basis functions is expressed through the second subscript of $M$. In the following, finite-rank representations of the coefficients $\xi_{m,M}$ are referred to as the second order Liouville modes for the dynamical system.}

Since $B_f$ is not known when $f$ is unknown. A finite rank proxy of $B_f$ needs to be constructed from the observed trajectories. In the place of the eigenfunctions of $B_f$, the eigenfunctions of a finite rank representation will be leveraged to determine an estimate for \eqref{eq:dmd}. Let $\{ \gamma_i \}_{i=1}^M \subset C^{2}\left([0,T],\mathbb{R}^n\right)$ be a collection of observed trajectories for the second order dynamical system, and let $\alpha = \{\Gamma^{(2)}_{\gamma_i}\}_{i=1}^M$ be the corresponding collection of second order occupation kernels in $H$. Let $P_{\alpha}$ be the projection onto $\vspan \alpha$. 

{\color{black} 
\begin{assumption}
    The derivation of the Liouville modes relies the assumptions that, \begin{enumerate*}[label = (\arabic*)]
        \item the second order Liouville operator $B_f$ admits eigenfunctions,
        \item the domain of $B_f$ contains the occupation kernels, and 
        \item the closed span of the eigenfunctions of $B_f$ contains the full state observable $\psi_{id}$.
    \end{enumerate*}
\end{assumption}
For $h\in \mathcal{D}(B_f)$, the coefficients $\{b_i\}_{i=1}^M$ in the projection of $B_f h$ onto $\vspan \alpha$, given by $P_\alpha B_f h = \sum_{i=1}^M b_i \Gamma_{\gamma_i}^{(2)}$, can be expressed as
\begin{equation*}
    \begin{pmatrix}
        b_1\\\vdots\\b_M
    \end{pmatrix} = \begin{pmatrix}\langle \Gamma^{(2)}_{\gamma_1}, \Gamma^{(2)}_{\gamma_1} \rangle_H & \cdots &  \langle \Gamma^{(2)}_{\gamma_1}, \Gamma^{(2)}_{\gamma_M} \rangle_H\\
\vdots & \ddots & \vdots\\
\langle \Gamma^{(2)}_{\gamma_M}, \Gamma^{(2)}_{\gamma_1} \rangle_H & \cdots &  \langle \Gamma^{(2)}_{\gamma_M}, \Gamma^{(2)}_{\gamma_M} \rangle_H \end{pmatrix}^{-1} \begin{pmatrix}
    \left\langle B_f h,\Gamma_{\gamma_1}^{(2)}\right\rangle_H\\\vdots\\\left\langle B_f h,\Gamma_{\gamma_M}^{(2)}\right\rangle_H
\end{pmatrix}.
\end{equation*}
Assuming that the occupation kernels are in the domain of the Liouville operator, i.e., $\alpha \subset \mathcal{D}(B_f)$, for $h\in \vspan \alpha$, given by $ h = \sum_{i=1}^M a_i \Gamma_{\gamma_i}^{(2)}$, we have
\begin{multline*}
    \left\langle B_f h,\Gamma_{\gamma_j}^{(2)}\right\rangle_H = \sum_{i=1}^M a_i\left\langle B_f\Gamma_{\gamma_i}^{(2)},\Gamma_{\gamma_j}^{(2)}\right\rangle_H = \sum_{i=1}^M a_i\left\langle \Gamma_{\gamma_i}, B_f^* \Gamma_{\gamma_j}^{(2)}\right\rangle_H \\= \left(\left\langle \Gamma_{\gamma_1}^{(2)}, B_f^* \Gamma_{\gamma_j}^{(2)}\right\rangle_H,\ldots,\left\langle \Gamma_{\gamma_M}^{(2)}, B_f^* \Gamma_{\gamma_j}^{(2)}\right\rangle_H\right)\begin{pmatrix}
        a_1\\\vdots\\a_M
    \end{pmatrix}.
\end{multline*}
As a result, a finite rank representation of $B_f$ restricted to $\vspan \alpha$, i.e., the matrix $[P_\alpha B_f]_\alpha^\alpha$ that maps the coefficients $\{a_i\}_{i=1}^M$ to the coefficients $\{b_i\}_{i=1}^M$, is given as
{\thickmuskip = 0mu\medmuskip=0mu\thinmuskip=0mu
\begin{gather}\small
    [P_\alpha B_f]_{\alpha}^\alpha =
    \begin{pmatrix}\langle \Gamma^{(2)}_{\gamma_1}, \Gamma^{(2)}_{\gamma_1} \rangle_H & \cdots &  \langle \Gamma^{(2)}_{\gamma_1}, \Gamma^{(2)}_{\gamma_M} \rangle_H\\
    \vdots & \ddots & \vdots\\
    \langle \Gamma^{(2)}_{\gamma_M}, \Gamma^{(2)}_{\gamma_1} \rangle_H & \cdots &  \langle \Gamma^{(2)}_{\gamma_M}, \Gamma^{(2)}_{\gamma_M} \rangle_H \end{pmatrix}^{-1}
    \begin{pmatrix}\langle B_f^* \Gamma^{(2)}_{\gamma_1}, \Gamma^{(2)}_{\gamma_1} \rangle_H & \cdots &  \langle B_f^* \Gamma^{(2)}_{\gamma_1}, \Gamma^{(2)}_{\gamma_M} \rangle_H\\
    \vdots & \ddots & \vdots\\
    \langle B_f^* \Gamma^{(2)}_{\gamma_M}, \Gamma^{(2)}_{\gamma_1} \rangle_H & \cdots &  \langle B_f^* \Gamma^{(2)}_{\gamma_M}, \Gamma^{(2)}_{\gamma_M}\rangle_H \end{pmatrix},\label{eq:finiteRank}
\end{gather}}
where $B_f^* \Gamma^{(2)}_{\gamma}$ was given in Proposition \ref{prop:gammainadjoint}. Letting $G$ denote the Gram matrix $(\langle \Gamma^{(2)}_{\gamma_i}, \Gamma^{(2)}_{\gamma_j} \rangle_H)_{i,j=1}^M$, a normalized ``eigenfunction'' can be extracted from an eigenvector, $\nu_j$, of $[P_\alpha B_f]_\alpha^\alpha$ with eigenvalue $\lambda_j$ as 
\begin{equation}
    \hat \varphi_j = \frac{1}{\sqrt{\nu_j^T G \nu_j}} \sum_{i=1}^M (\nu_j)_i \Gamma^{(2)}_{\gamma_i},\label{eq:eigenfunction}
\end{equation}
which can be leveraged as a proxy for a proper eigenfunction of $B_f$, in keeping with the implementation of DMD for Koopman and Liouville operators. In \eqref{eq:eigenfunction} and in the following development, $(x)_i$ denotes the projection onto the $i$-th coordinate of $x \in \mathbb{R}^n$.

The second order Liouville modes can then be constructed by examining the inner products $\left\langle(\psi_{id})_i,\Gamma_{\gamma_j}^{(2)}\right\rangle_H$, where $(\psi_{id})_i$ is the $i$-th component of the full state observable, i.e., $(\psi_{id})_i[\theta](t) := (\theta(t))_i$. The second order Liouville modes $ \{(\xi_m)_i\}_{m=1}^M $ are defined as the coefficients in the projection of $(\psi_{id})_i$ onto the span of the normalized eigenfunctions in \eqref{eq:eigenfunction} that is, 
\begin{gather*}\small
    \begin{pmatrix}
        \left\langle(\psi_{id})_1,\Gamma_{\gamma_j}^{(2)}\right\rangle_H\\\vdots\\\left\langle(\psi_{id})_n,\Gamma_{\gamma_j}^{(2)}\right\rangle_H
    \end{pmatrix} \approx \begin{pmatrix}
        \left\langle\sum_{m=1}^M (\xi_m)_1\hat\varphi_m,\Gamma_{\gamma_j}^{(2)}\right\rangle_H\\\vdots\\\left\langle\sum_{m=1}^M (\xi_m)_n\hat\varphi_m,\Gamma_{\gamma_j}^{(2)}\right\rangle_H
    \end{pmatrix} = \begin{pmatrix}
        \left\langle\sum_{m=1}^M (\xi_m)_1\sum_{k=1}^M\frac{(\nu_m)_k\Gamma_{\gamma_k}^{(2)}}{\sqrt{\nu_m^T G \nu_m}},\Gamma_{\gamma_j}^{(2)}\right\rangle_H\\\vdots\\\left\langle\sum_{m=1}^M (\xi_m)_n\sum_{k=1}^M\frac{(\nu_m)_k\Gamma_{\gamma_k}^{(2)}}{\sqrt{\nu_m^T G \nu_m}},\Gamma_{\gamma_j}^{(2)}\right\rangle_H
    \end{pmatrix} \\
    =\sum_{m=1}^M \xi_m \sum_{k=1}^M \frac{(\nu_m)_k}{\sqrt{\nu_m^T G \nu_m}} \left\langle\Gamma_{\gamma_k}^{(2)}, \Gamma_{\gamma_j}^{(2)}\right\rangle_H = \sum_{m=1}^M  \frac{\xi_m \nu_m^T G^j}{\sqrt{\nu_m^T G \nu_m}},
\end{gather*}
where $G^j$ denotes the $j$-th column of the Gram matrix. The matrix $\xi := \begin{pmatrix} \xi_1&\cdots&\xi_M \end{pmatrix}$ of second order Liouville modes is then given by
\begin{gather}
    \xi = \begin{pmatrix}
        \left\langle(\psi_{id})_1,\Gamma_{\gamma_1}^{(2)}\right\rangle_H&\cdots&\left\langle(\psi_{id})_1,\Gamma_{\gamma_M}^{(2)}\right\rangle_H\\\vdots&\ddots&\vdots\\\left\langle(\psi_{id})_n,\Gamma_{\gamma_1}^{(2)}\right\rangle_H&\cdots&\left\langle(\psi_{id})_n,\Gamma_{\gamma_M}^{(2)}\right\rangle_H
    \end{pmatrix}\left(
    \begin{pmatrix}
        \frac{\nu_1^T}{\sqrt{\nu_1^T G \nu_1}}\\\vdots\\\frac{\nu_M^T}{\sqrt{\nu_M^T G \nu_M}}
    \end{pmatrix}G\right)^{-1}. \label{eq:LiouvilleModes}
\end{gather}
}

\section{Evaluation of the Occupation Kernels}\label{sec:evaluation}

{\color{black}Equations \eqref{eq:finiteRank}, \eqref{eq:eigenfunction}, and \eqref{eq:LiouvilleModes} constitute the algorithm for obtaining the second order dynamic modes of a dynamical system from observed trajectories, which can be utilized via \eqref{eq:dmd} to obtain a data driven model for the dynamical system that reconstructs trajectories of the system starting from given initial conditions. To implement the algorithm, one needs to evaluate} the following quantities:
$\langle \Gamma_{\gamma_i}^{(2)}, \Gamma_{\gamma_j}^{(2)} \rangle_H$,
$\langle B_f^* \Gamma_{\gamma_i}^{(2)}, \Gamma_{\gamma_j}^{(2)} \rangle_H$,
$\Gamma_{\gamma_i}^{(2)}[\theta]$, {\color{black}and $\left\langle(\psi_{id})_k,\Gamma_{\gamma_i}^{(2)}\right\rangle_H$,
where $i,j=1,\ldots,M$, $k=1,\ldots,n$,} and $\theta \in C^{2}\left([0,T],\mathbb{R}^n\right)$.

First note that as a surjective isometry, the operator $\mathcal{T}$, given in Theorem \ref{thm:construction}, is an unitary operator. Moreover, if $\psi \in H$ such that $g = \mathcal{T}^{-1}\psi$, then for $\theta \in C^{2}\left([0,T],\mathbb{R}^n\right)$ and $t \in [0,T]$, the following holds
$\langle \psi, K_{\theta,\mathscr{K}_t} \rangle_H = \psi[\theta](t) = g(\theta(t)) = \langle g, K_{\theta(t)} \rangle_{\tilde H}.$ Since $\psi$ was arbitrary, it follows that $K_{\theta,\mathscr{K}_t} = \mathcal{T} \tilde K_{\theta(t)}.$ Thus,
\[
    \langle K_{\theta_1,\mathscr{K}_{t_1}}, K_{\theta_2,\mathscr{K}_{t_2}} \rangle_H = \langle \mathcal{T}\tilde K_{\theta_1(t_1)}, \mathcal{T}\tilde K_{\theta_2(t_2)} \rangle_{ H}=\langle \tilde K_{\theta_1(t_1)},\tilde K_{\theta_2(t_2)} \rangle_{\tilde H} = \tilde K(\theta_2(t_2),\theta_1(t_1)).
\]
As a result, the following holds
\begin{gather*}\Gamma_{\gamma_i}^{(2)}[\theta](t) = \langle \Gamma_{\gamma_i}^{(2)}, K_{\theta,\mathscr{K}_t} \rangle_H=
\int_0^T (T-\tau) K_{\theta,\mathscr{K}_t}[\gamma_i](\tau) d\tau = 
\int_0^T (T-\tau) \tilde K(\theta(t),\gamma_i(\tau)) d\tau.\end{gather*}
Consequently,
\begin{gather*}
   \langle \Gamma_{\gamma_i}^{(2)}, \Gamma_{\gamma_j}^{(2)} \rangle_H = \int_0^T \int_0^T (T-\tau)(T-t) \tilde K(\gamma_i(t),\gamma_j(\tau)) dt d\tau. 
\end{gather*}

To compute $\langle B_f^* \Gamma_{\gamma_i}^{(2)}, \Gamma_{\gamma_j}^{(2)} \rangle_H$. Recall that when $\gamma$ is trajectory satisfying $\ddot \gamma = f(\gamma)$, $B_f^* \Gamma^{(2)}_\gamma = K_{\gamma,\mathscr{K}_T} - K_{\gamma,\mathscr{K}_0} - T K_{\gamma,\mathscr{K}'_0}$ according to Proposition \ref{prop:gammainadjoint}. By linearity, the only unresolved quantity is $\langle K_{\gamma_i,\mathscr{K}'_0}, \Gamma_{\gamma_j}^{(2)} \rangle_H.$ As above consider $\langle \psi, K_{\gamma_i,\mathscr{K}'_0} \rangle_H = \psi'[\gamma_i](0) = g'(\gamma_i(0)) = \langle g, K'_{\gamma_i(0)} \rangle_{\tilde H}.$ Hence,
\[ \langle K_{\gamma_i,\mathscr{K}'_0}, \Gamma_{\gamma_j}^{(2)} \rangle_H = \int_0^T (T-t) \nabla_2 \tilde K(\gamma_j(t),\gamma_i(0))\dot\gamma(0) dt. \]

{\color{black}The remaining computation for $\left\langle(\psi_{id})_k,\Gamma_{\gamma_i}^{(2)}\right\rangle_H$ can be accomplished by recalling that 
\[
    \left\langle(\psi_{id})_k,\Gamma_{\gamma_i}^{(2)}\right\rangle_H =  \left\langle(\psi_{id})_k[\gamma_i],1^{(-2)}\right\rangle_\mathcal{Y} = \left\langle(\gamma_i)_k,1^{(-2)}\right\rangle_\mathcal{Y} = \int_0^T (T-\tau) (\gamma_i(\tau))_k d\tau.
\]

\section{Numerical experiments}\label{sec:experiments}
The following numerical experiments demonstrate the efficacy of the developed DMD method.

\textbf{Experiment 1:} In this experiment, 4 trajectories of the undamped linear oscillator $\ddot{\gamma} = -2\gamma$, over the time horizon $[0,5]$, starting from different initial conditions, are utilized to build a data-driven model in the form of Liouville modes and eigenfunctions and eigenvalues of the finite rank representation of the second order Liouville operator. The trajectories are sampled every 0.5s. To simulate measurement noise, a random number drawn from the zero mean Gaussian distribution with standard deviation 0.01 is added to every sample of the trajectories and the time derivatives at the initial time. 

The Liouville modes and the finite rank eigendecomposition are then used in \eqref{eq:dmd} to reconstruct trajectories $\hat \gamma$ of the system, over a longer time horizon of $[0,10]$, starting from 1000 randomly selected initial conditions in the domain $[-1,1]\times [-1,1]$. The root mean square (RMS) value of the relative error signal $t\mapsto \nicefrac{\gamma(t) - \hat{\gamma}(t)}{\left\vert\gamma\right\vert_\infty}$ is used to quantify the quality of the data-driven model. Figure \ref{fig:linearSystemRMS} shows the relative RMS errors in each of the 1000 trials.
\begin{figure}
    \centering
        \begin{minipage}[t]{0.49\textwidth}
        \includegraphics[width=\textwidth]{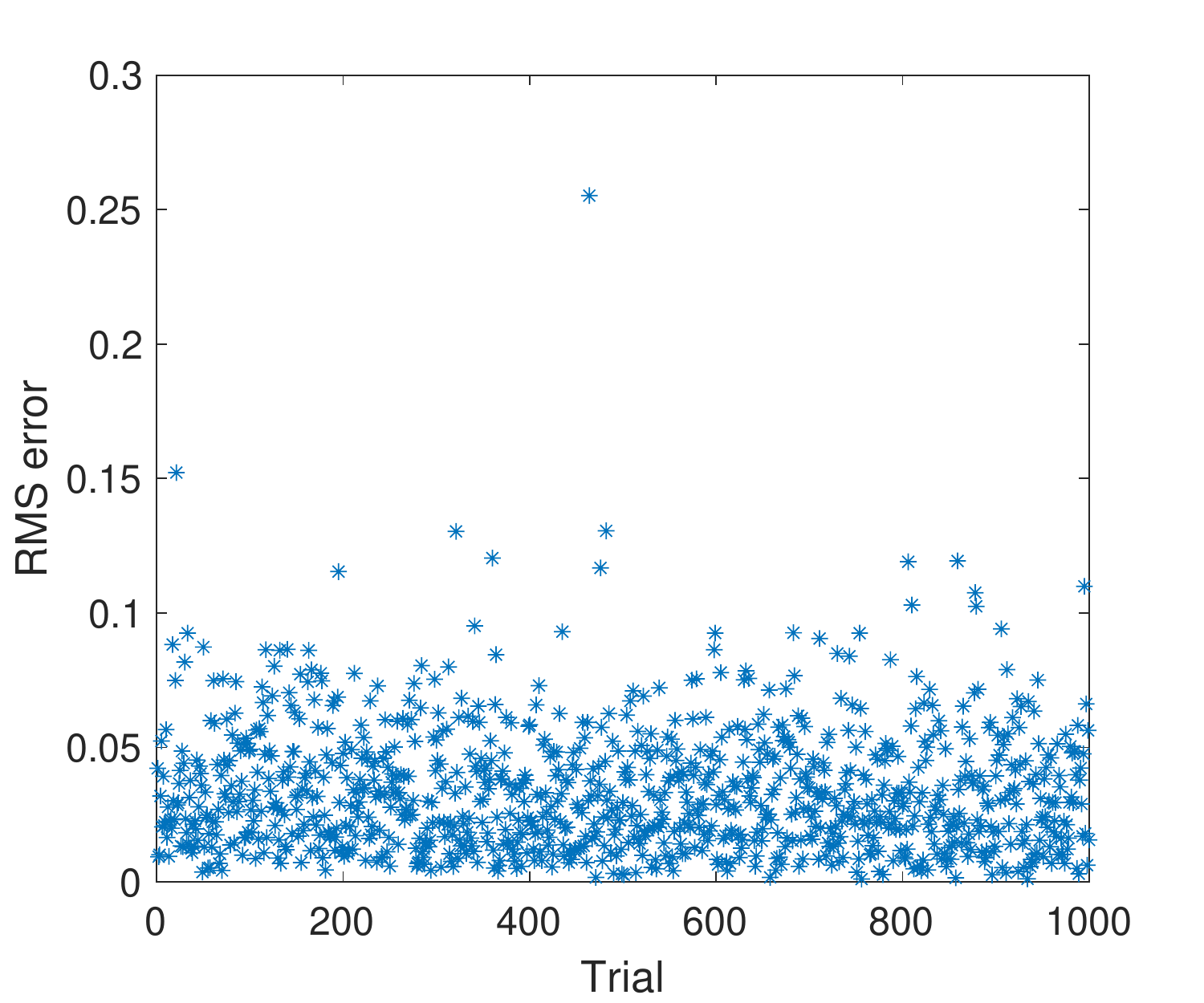}
        \caption{Reconstruction results for the linear oscillator. The root mean square (RMS) value of the relative error signal $t\mapsto \nicefrac{\gamma(t) - \hat{\gamma}(t)}{\left\vert\gamma\right\vert_\infty}$ plotted against trial number, where each trial reconstructs a trajectory of the linear oscillator, over $t\in[0,10]$, starting from a randomly selected initial condition.}
        \label{fig:linearSystemRMS}
    \end{minipage}\hfill\begin{minipage}[t]{0.49\textwidth}
        \includegraphics[width=\textwidth]{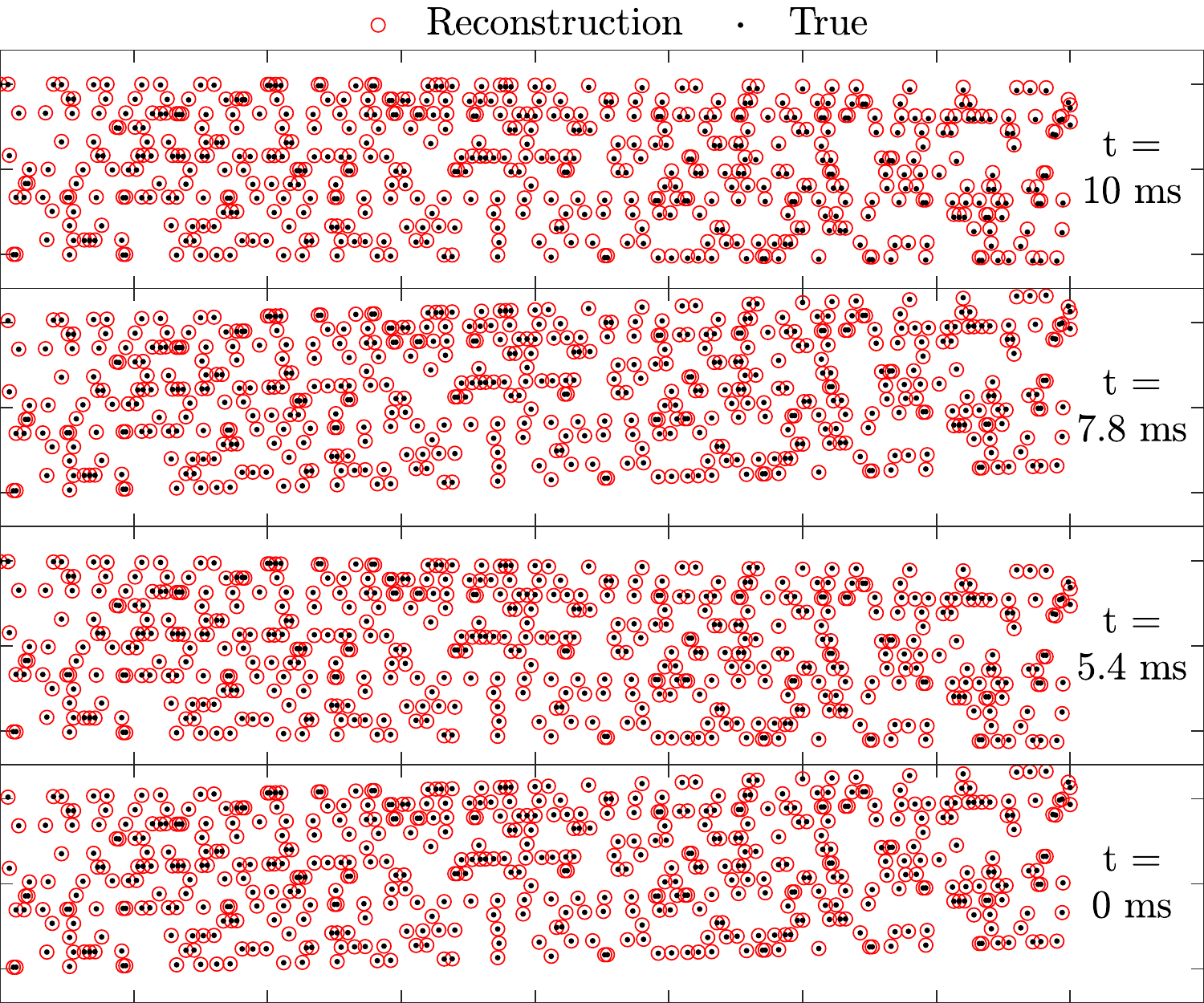}
        \caption{An undamped two dimensional cantilever beam fixed at the left end. True positions (black dots) of select finite element nodes are compared against their reconstructed positions (red circles), computed using a DMD model trained from trajectory segments 2.3 ms long. Reconstruction error is indicated by displacement of the black dots away from the center of the red circles.}
        \label{fig:BeamSnapshots}
    \end{minipage}
\end{figure}

\textbf{Experiment 2:} This experiment comprises of a two dimensional undamped cantilever beam that is initially assumed to be bent under a load applied at the end. The load is removed at the initial time, resulting in sustained oscillations. A single trajectory of the beam, comprised of displacements in the $x$ and $y$ direction over the time horizon $[0,0.0236]$ of 6511 nodes is computed using the MATLAB\textsuperscript{\textregistered} Partial Differential Equation Toolbox \cite{SCC.MATLABPDE2021}. The trajectory is sampled every 0.0788 ms. The resulting dataset of 301 snapshots, each with dimension 13022, is segmented to yield 271 trajectories, 2.3 ms (31 snapshots) long. The 271 trajectories are then utilized for second order dynamic mode decomposition.

The Liouville modes and the finite rank eigendecomposition are then used in \eqref{eq:dmd} to reconstruct the original trajectory $\hat \gamma$ over the time horizon $[0,0.0102]$, starting from the same initial condition as the original trajectory. Figure \ref{fig:BeamSnapshots} shows true and reconstructed positions of select nodes in the finite element analysis (FEA).}
\section{Discussion}

This manuscript presents a framework for second order DMD methods by introducing signal valued RKHSs. These Hilbert spaces may be constructed from traditional scalar valued RKHSs via the construction given in Theorem \ref{thm:construction}. The mapping $T: \tilde H \to H$ is a unitary operator and $H$ can be seen to be unitarily equivalent to $\tilde H$. However, this extra dressing on top of $\tilde H$ that enables the consideration of a signal as the fundamental input is necessary for the definition of higher order Liouville operators, such as $B_f$. Significantly, as can be observed in Section \ref{sec:evaluation}, the range space $\mathcal{Y}$ is not accessed for any of the evaluations. This means that $\mathcal{Y}$ is necessary \emph{only} for theoretical completeness and not for practical implementation. While only second order Liouville operators were presented, signal valued RKHSs are sufficient for the analysis of higher order Liouville operators, with the adjustment of the signal space from $C^{2}\left([0,T],\mathbb{R}^n\right)$ to $C^{d}\left([0,T],\mathbb{R}^n\right)$ for the $d$-th order Liouville operator.

Section \ref{sec:dmdsecondorder} gives an algorithm for the direct DMD analysis of second order systems that avoids augmenting the system state and numerically estimating the time derivative of the system state. Accounting for the computations in Section \ref{sec:evaluation}, it can be seen that the implementation of the algorithm differs only slightly from that of the DMD method in \cite{rosenfeldDMD}. However, it can be observed that it is necessary to numerically estimate the time derivative at a single point in the time domain, $t=0$, which is a dramatic improvement over numerically differentiating the entire trajectory. However, higher order Liouville operators will require additional numerical derivatives at the origin, which may limit the applicability of the method when initial values of high order system is unknown.

{\color{black}Section \ref{sec:experiments} includes two numerical experiments, one that demonstrates the robustness of second order Liouville DMD to measurement noise and another that demonstrates application to systems with a high dimensional state space. The results of Experiment 1 demonstrate that even with a very small amount of noisy data, the models generated via second-order Liouville DMD accurately reconstruct and predict system behavior starting from given initial conditions. The results of Experiment 2 demonstrate the applicability of second order Liouville DMD to a real-world problem with a very high-dimensional state space. The generated data-driven model is able to accurately characterize the transient dynamics of an undamped two dimensional cantilever beam starting from given initial conditions.

The results demonstrated in Section \ref{sec:experiments} are obtained using Gaussian kernels for Experiment 1 and dot product kernels for Experiment 2, but similar results can also be obtained using, among others, Gaussian kernels and linear and exponential dot product kernels. While agnostic to the type of kernels, a successful implementation of the DMD technique does require careful selection of shape parameters, e.g., width for Gaussian kernels, slope for linear kernels, and decay rate for exponential kernels. The parameters were hand-tuned using trial and error for experiments presented in Section \ref{sec:experiments}, and their values are available in the code submitted with the article.} %{\color{red}RK: Joel, we have some rules of thumb for the Gaussian kernel, anything about the others?}

\section{Conclusion}\label{sec:conclusion}

This manuscript presented the necessary theoretical foundations for the definition of higher order Liouville operators. This manuscript proceeded to give a DMD algorithm for modeling a second order dynamical system that leveraged second order Liouville operators and occupation kernels. Through the examination of the components of the algorithm in Section \ref{sec:evaluation}, it was determined that the actual implementation of the second order DMD routine requires only a slight modification of DMD with occupation kernels in \cite{rosenfeldDMD}, and the range space was ultimately unnecessary for computations.

{\color{black}The experiments in Section \ref{sec:experiments} involve undamped oscillators and undamped beams because damped oscillators and damped beams typically cannot be cast into the form $\ddot{\gamma} = f(\gamma)$, but require the form $\ddot{\gamma} = f(\gamma,\dot{\gamma})$. An immediate future direction for this research is in exploring extension to such models where the higher order derivatives of the output depend not just on the output, but also on its derivatives.

While the primary advantage of second order DMD is that it does not require measurement of derivatives of the trajectory, the authors postulate that even when the derivatives are available, the smaller state dimension in second order DMD would lead to similar accuracy but faster execution, especially in systems with a large number of state variables. Confirmation of the above postulate via a detailed  numerical comparison of the performance (in terms of computation time) of models generated using first order DMD with those generated using second order DMD is also a part of future research.}

% \section*{Broader Impact}

% This manuscript develops the theoretical foundations for dynamic mode decomposition of second order dynamical systems that avoids the use of numerical derivatives and augmented state variables. The developed methods may be readily adapted to higher order dynamical systems. While this manuscript focuses on dynamic mode decomposition, the developed theory of Liouville operators and occupation kernels for vector valued RKHSs has wide-ranging potential applications in identification and control problems in systems with state and input delays, fractional order systems, etc. The community of researchers working on these challenging problems may benefit from the theoretical foundations developed in this manuscript. The authors do not anticipate anyone to be put at a disadvantage from this research.

\begin{ack}
This research was supported by the Air Force Office of Scientific Research (AFOSR) under contract numbers FA9550-20-1-0127 and FA9550-21-1-0134, and the National Science Foundation (NSF) under award 2027976. Any opinions, findings and conclusions or recommendations expressed in this material are those of the author(s) and do not necessarily reflect the views of the sponsoring agencies.
\end{ack}

%\section*{References}
\small
\bibliographystyle{IEEEtran}
\bibliography{references}

\end{document}